\definecolor{darkred}{rgb}{1,0,0} 
\definecolor{darkgreen}{rgb}{0,.8,0}
\definecolor{darkblue}{rgb}{0,0,1}
\numberwithin{equation}{section}
\newtheorem {Theorem}{Theorem}
\numberwithin{Theorem}{section}
\newtheorem {Corollary}[Theorem]{Corollary}
\theoremstyle{definition}
\theoremstyle{remark}
\newtheorem{Remark}[Theorem]{Remark}
\chardef\csname pre amssym.def at\endcsname=\the\catcode`\@
\def\undefine#1{\let#1\undefined}
\def\newsymbol#1#2#3#4#5{\let\next@\relax
 \ifnum#2=\@ne\let\next@\msafam@\else
 \ifnum#2=\tw@\let\next@\msbfam@\fi\fi
 \mathchardef#1="#3\next@#4#5}
\def\mathhexbox@#1#2#3{\relax
 \ifmmode\mathpalette{}{\m@th\mathchar"#1#2#3}%
 \else\leavevmode\hbox{$\m@th\mathchar"#1#2#3$}\fi}
\def\hexnumber@#1{\ifcase#1 0\or 1\or 2\or 3\or 4\or 5\or 6\or 7\or 8\or
 9\or A\or B\or C\or D\or E\or F\fi}
\font\teneufm=eufm10
\font\seveneufm=eufm7
\font\fiveeufm=eufm5
\def    \eps    {\epsilon}
\newcommand{\CA}{{\mathcal A}}
\newcommand{\CS}{{\mathcal S}}
\newcommand{\const}{{\mathit const}}
\newcommand{\Pp}{{\mathcal P}}
\def    \nat    {{\natural}}
\def    \R      {{\mathbb R}}
\def    \Z      {{\mathbb Z}}
\def    \N      {{\mathbb N}}
\def    \T      {{\mathbb T}}
\def    \CP     {{\mathbb C}{\mathbb P}}
\def    \12    {{\frac{1}{2}}}
\def    \HF     {\operatorname{HF}}
\def    \H      {\operatorname{H}}
\def    \Tor    {\operatorname{Tor}}
\def  \ka {\kappa}
\def  \defn  {\mathrel{\mathop:}=}
\def  \MUCZ  {\operatorname{\mu_{\scriptscriptstyle{CZ}}}}
\begin{document}


\setlength{\smallskipamount}{6pt}
\setlength{\medskipamount}{10pt}
\setlength{\bigskipamount}{16pt}





\title[Non-contractible Periodic Orbits]{On Non-contractible Periodic
  Orbits of Hamiltonian Diffeomorphisms}

\author[Ba\c sak G\"urel]{Ba\c sak Z. G\"urel}

\address{BG: Department of Mathematics, Vanderbilt University,
  Nashville, TN 37240, USA} 
\email{basak.gurel@vanderbilt.edu}

\subjclass[2000]{53D40, 37J10} \keywords{Non-contractible periodic
  orbits, Hamiltonian flows, Floer homology}

\date{\today} 

\thanks{The work is partially supported by the NSF grants DMS-0906204
  and DMS-1207680.}


\begin{abstract} 
  We prove that any Hamiltonian diffeomorphism of a closed symplectic
  manifold equipped with an atoroidal symplectic form has simple
  non-contractible periodic orbits of arbitrarily large period,
  provided that the diffeomorphism has a non-degenerate (or even
  isolated and homologically non-trivial) periodic orbit with non-zero
  homology class and the set of one-periodic orbits in that class is
  finite.
\end{abstract}

\maketitle

\tableofcontents
\section{Introduction and main results}
\label{sec:intro}

In this paper, we study the question of existence of non-contractible
periodic orbits for Hamiltonian diffeomorphisms $\varphi_H$ of a
closed symplectic manifold $(M^{2n}, \omega)$ such that the integral
of $\omega$ over all toroidal classes in $\H_2(M;\Z)$ vanishes. Under
this condition, we prove that $\varphi_H$ has non-contractible
periodic orbits of arbitrarily large prime period, provided that it
has one non-degenerate (or isolated and homologically non-trivial)
periodic orbit with homology class non-zero modulo torsion.

Non-contractible periodic orbits of Hamiltonian systems have been
previously investigated in a number of papers using Floer theoretical
techniques; see, e.g., \cite{BPS,BuHa,GL,Lee,Ni,We}.  The central
theme of this paper, however, lies in a different domain.
Conceptually, the key difference is that here we allow the period of
the orbits to vary, whereas the above works concern the existence of
periodic orbits (in a given free homotopy class) of a fixed period.
In other words, we approach the problem from a dynamical systems
perspective rather than topological.  On the technical side, the
source of periodic orbits in those works is non-vanishing Floer
homology or other more subtle invariants such as Floer-theoretic
torsion.  In general, one major obstacle to employing this method in
the investigation of non-contractible periodic orbits for closed
manifolds is that the total non-contractible Floer homology is zero
since all one-periodic orbits of a $C^2$-small autonomous Hamiltonian
are constant, and hence contractible. (Thus even the entire Floer
complex may be zero.)  Therefore, the previous results mainly apply to
open manifolds such as ordinary and twisted cotangent bundles; see,
e.g., \cite{BPS,Ni,We}.  Working with closed manifolds, we take a
different approach. In our case, the source for periodic orbits is the
change in certain filtered Floer homology groups under the iteration
of the Hamiltonian, and the original non-contractible orbit spawns
infinitely many other orbits. (Our argument shares many common
elements with that \cite{Gu:hq}.)

To put our result in perspective in a different way, recall the
following variant of the Conley conjecture, referred to as the
HZ-conjecture in \cite{Gu:hq}. This is the assertion that a
Hamiltonian diffeomorphism with ``more than necessary'' non-degenerate
(or just homologically non-trivial) fixed points has infinitely many
periodic orbits. Here ``more than necessary'' is usually interpreted
as a lower bound arising from some version of the Arnold
conjecture. For $\CP^n$, the expected threshold is $n+1$.  This
conjecture, originally stated in \cite{HZ}, is inspired by a
celebrated theorem of Franks stating that a Hamiltonian diffeomorphism
(or, even, an area preserving homeomorphism) of $S^2$ with at least
three fixed points must have infinitely many periodic orbits,
\cite{Fr1,Fr2}; see also \cite{FrHa,LeC} for further refinements and
\cite{BH,CKRTZ,Ke:JMD} for symplectic topological proofs. 

Viewing the HZ-conjecture in a broader context is often
useful. Namely, it appears that the presence of a fixed point that is
unnecessary from a homological or geometrical perspective is already
sufficient to force the existence of infinitely many periodic orbits.
For instance, a theorem from \cite{GG:hyperbolic} asserts that, for a
certain class of closed monotone symplectic manifolds including
$\CP^n$, any Hamiltonian diffeomorphism with a hyperbolic fixed point
must necessarily have infinitely many periodic orbits; see also
\cite{Gu:hq} for other results along these lines. (Note also that
these considerations are related to and motivated by the circle of
questions in the realm of the original Conley conjecture; see,
\cite{FrHa,Gi:conley,GG:neg-mon, Hi,LeC,SZ} and also
\cite{CGG,GG:gaps,He:irr} for a detailed account of this conjecture
and related results.)

The main result of the paper therefore can be viewed as the proof of
the generalized HZ-conjecture for non-contractible periodic orbits of
Hamiltonian diffeomorphisms. Indeed, observe that from a homological
perspective a Hamiltonian diffeomorphism of a closed symplectic
manifold need not have a non-contractible periodic orbit. However, our
result shows that, if such an orbit exists, it generates infinitely
many others.

Let us now state the main result of the paper. Let $(M^{2n}, \omega)$
be a closed symplectic manifold equipped with an \emph{atoroidal}
symplectic form $\omega$. (Namely, we assume that for every map
$v\colon \T^2\to M$, the integral of $\omega$ over $v$ vanishes, i.e.,
$\left< [\omega], [v] \right>=0$.)  For a Hamiltonian diffeomorphism
$\varphi_H$ of $M$, denote by $\Pp_1(\varphi_H,[\gamma])$ the
collection of one-periodic orbits of $\varphi_H$ with homology class
$[\gamma] \in \H_1(M;\Z)/\Tor$.

\begin{Theorem}
\label{thm:main}
Let $M$ be a closed symplectic manifold equipped with an atoroidal
symplectic form $\omega$. Assume that a Hamiltonian diffeomorphism
$\varphi_H$ of $M$ has a non-degenerate one-periodic orbit $\gamma$
with homology class $[\gamma] \neq 0$ in $\H_1(M;\Z)/\Tor$ and that
$\Pp_1(\varphi_H,[\gamma])$ is finite.  Then, for every sufficiently
large prime $p$, the diffeomorphism $\varphi_H$ has a simple periodic
orbit with homology class $p[\gamma]$ and period either $p$ or $p'$,
where $p'$ is the first prime number greater than $p$.
\end{Theorem}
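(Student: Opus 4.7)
The plan is to argue by contradiction, exploiting two consequences of atoroidality. First, the symplectic action functional is single-valued on each free homotopy class of loops in $M$, so action-filtered Floer homology $\HF^{(a,b)}_*(H^{\nat p}, p[\gamma])$ is well defined for each $p$. Second, by Floer invariance applied to a $C^2$-small autonomous Hamiltonian (whose only one-periodic orbits are constants and hence contractible), the total Floer homology in any non-zero class vanishes: $\HF_*(H^{\nat p}, p[\gamma]) = 0$ for every $p \geq 1$. I would then assume for contradiction that some large prime $p$ admits \emph{neither} a simple periodic orbit of period $p$ in class $p[\gamma]$ \emph{nor} a simple periodic orbit of period $p'$ in class $p'[\gamma]$. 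Primality ensures that every one-periodic orbit of $H^{\nat p}$ (respectively $H^{\nat p'}$) in the relevant class is then the $p$-th (respectively $p'$-th) iterate of an element of the finite set $\Pp_1(\varphi_H, [\gamma])$, so $\CF_*(H^{\nat p}, p[\gamma])$ has at most $|\Pp_1(\varphi_H, [\gamma])|$ generators, and similarly for $p'$.

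The next step is to extract the stable local contribution of the iterates of $\gamma$. Since $\gamma$ is non-degenerate, the eigenvalues of its linearized return map that are roots of unity have bounded order, so for all sufficiently large primes $p$ the iterate $\gamma^p$ remains non-degenerate and $\HFL(\gamma^p)$ is one-dimensional, concentrated in degree $\MUCZ(\gamma^p)$. By the Salamon–Zehnder iteration formula, $\MUCZ(\gamma^p) = p\,\hMUCZ(\gamma) + O(1)$, and similarly $\MUCZ(\eta^p) = p\,\hMUCZ(\eta) + O(1)$ for every $\eta \in \Pp_1(\varphi_H, [\gamma])$, with error uniform in $p$. The non-zero local class of $\gamma^p$, together with vanishing of the total Floer homology, forces the existence of another generator of $\CF_*(H^{\nat p}, p[\gamma])$ whose degree differs from $\MUCZ(\gamma^p)$ by $\pm 1$ and whose action lies close enough to $p\,\mathcal{A}_H(\gamma)$ to participate in a Floer differential involving $\gamma^p$.

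The heart of the argument, and in my estimation the main obstacle, is to rule out such a cancelling generator for at least one of the two consecutive primes $p$ and $p'$. Under the contradiction hypothesis, every candidate cancelling generator is an iterate $\eta^p$ with Conley–Zehnder index $p\,\hMUCZ(\eta) + O(1)$ and action $p\,\mathcal{A}_H(\eta)$. Since there are only finitely many pairs $(\hMUCZ(\eta), \mathcal{A}_H(\eta))$ arising from $\Pp_1(\varphi_H, [\gamma])$, while the cancellation requires an integer index gap of exactly $\pm 1$, one exploits the primality and coprimality of $p$ and $p'$ to show that the constraint cannot hold for both. The delicate point is reconciling the real-valued mean indices $\hMUCZ(\eta)$ with the integer $\pm 1$ gap demanded by a differential, where the fact that $p'$ is the \emph{next} prime after $p$—in particular coprime to $p$ and separated from it by a gap small compared to $p$—appears to be used in an essential way.
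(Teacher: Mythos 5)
Your proposal takes a genuinely different route from the paper, and unfortunately it has several serious gaps. Let me explain both the mismatch and the gaps.

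\textbf{Mis-stated contradiction hypothesis.} You set out to contradict the absence of ``a simple periodic orbit of period $p$ in class $p[\gamma]$'' and of ``a simple periodic orbit of period $p'$ in class $p'[\gamma]$.'' But the theorem asserts the existence of a simple orbit of \emph{class $p[\gamma]$} and period $p$ or $p'$ --- the class is fixed at $p[\gamma]$ in both alternatives. The period-$p'$ orbit produced by the paper lives in the homology class $p[\gamma]$ (equivalently, the homotopy class $\llbracket\gamma\rrbracket^p$), not $p'[\gamma]$; this is essential, because it is precisely the coprimality of $p$ and $p'$ that forces such an orbit to be simple. Under your hypothesis there is nothing preventing $\varphi_H$ from having plenty of orbits of period $p'$ in class $p[\gamma]$, so your contradiction setup does not even engage the actual claim.

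\textbf{The index-based strategy is not the paper's and faces real obstructions.} You try to extract a contradiction from the vanishing of total Floer homology in a non-zero class by doing Conley--Zehnder index arithmetic (mean-index iteration formula, $\pm1$ gap required by a Floer differential, primality of $p$ and $p'$). Three problems: (i) the Conley--Zehnder index for non-contractible orbits is only defined modulo $2N$, with $N$ the minimal toroidal Chern number, so the asymptotics $\MUCZ(\eta^p)=p\,\hMUCZ(\eta)+O(1)$ are not available as stated unless $c_1(TM)$ vanishes on toroidal classes --- an assumption the theorem does not make; (ii) ``$\HF(\gamma^p)\neq 0$ but total $\HF=0$ forces a cancelling generator of adjacent degree and nearby action'' is not a valid deduction at the chain level --- the local class of $\gamma^p$ must eventually die in the filtered homology, but this can happen through a chain of orbits across many degrees and actions, not a single $\pm1$ partner; (iii) you explicitly acknowledge that the ``heart of the argument'' --- ruling out such a cancelling generator for one of the two primes --- is unresolved. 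I do not see how to close that gap with the tools you propose, because the finite set $\{\hMUCZ(\eta)\}$ of real mean indices gives no arithmetic incompatibility with integer $\pm1$ gaps that holds for two consecutive primes; the mean-index errors $O(1)$ swamp any such constraint.

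\textbf{What the paper actually does.} The paper's proof uses no index analysis at all. Under the contradiction hypothesis (no simple $p_i$-periodic orbit of class $p_i[\gamma]$), the action spectrum of $H^{\nat p_i}$ in the relevant free homotopy class is $p_i\bigcup_l\CS(H,\llbracket\gamma_l\rrbracket)$, so the action gap around $0$ has width $2p_ia$, growing linearly in $p_i$. By contrast, the action shift of the continuation maps from $H^{\nat p_i}$ to $H^{\nat p_{i+1}}$ and back is at most $C(p_{i+1}-p_i)$ with $C=\|H\|$, which is $o(p_i)$ by the prime-gap bound. A standard commutative diagram then shows the filtered Floer homology of $H^{\nat p_{i+1}}$ in class $\llbracket\gamma\rrbracket^{p_i}$ is non-zero, producing a $p_{i+1}$-periodic orbit $\eta$ in class $p_i[\gamma]$; simplicity of $\eta$ then follows from the coprimality of $p_i$ and $p_{i+1}$. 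This action-scale separation between $O(p_i)$ and $O(p_{i+1}-p_i)$ is the key idea that your proposal is missing.
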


\begin{Corollary} 
\label{cor:growth}
In the setting of Theorem \ref{thm:main} (or Theorem \ref{thm:main2}),
the number of non-contractible periodic orbits with period less than
or equal to $k$, or the number of distinct homology classes
represented by such orbits, is bounded from below by $ \const \cdot
k/\log k$.
\end{Corollary}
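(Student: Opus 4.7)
The plan is to deduce the corollary from Theorem \ref{thm:main} by a prime-counting argument. First, I would fix a threshold $p_0$ so that the conclusion of Theorem \ref{thm:main} holds for every prime $p \geq p_0$, and for each such $p$ select a simple periodic orbit $x_p$ of $\varphi_H$ with homology class $p[\gamma] \in \H_1(M;\Z)/\Tor$ and period $\tau_p \in \{p,p'\}$, where $p'$ denotes the prime immediately following $p$.

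Next, I would verify that distinct primes produce distinct orbits realizing distinct homology classes. Since $[\gamma]$ is non-torsion, $m[\gamma] \neq 0$ in $\H_1(M;\Z)/\Tor$ for every non-zero integer $m$; in particular $p[\gamma] \neq q[\gamma]$ whenever $p$ and $q$ are distinct primes $\geq p_0$. Thus the orbits $x_p$ represent pairwise distinct non-zero classes in $\H_1(M;\Z)/\Tor$, so they are pairwise distinct and each is non-contractible.

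Third, I would control the period so as to restrict the count to orbits with period at most $k$. By Bertrand's postulate, $p' < 2p$, so for every prime $p$ with $p_0 \leq p \leq k/2$ one has $\tau_p \leq p' < k$. The number of primes in $[p_0, k/2]$ is $\pi(k/2) - \pi(p_0 - 1)$, which by the prime number theorem exceeds $c \cdot k/\log k$ for some constant $c > 0$ depending only on $\varphi_H$, provided $k$ is large enough. This yields the desired lower bound $\const \cdot k/\log k$ on both the number of non-contractible periodic orbits of period at most $k$ and the number of distinct homology classes they realize.

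The argument is a routine counting consequence of Theorem \ref{thm:main}, so I do not expect any serious obstacle at this stage; the substantive content lies entirely in the theorem. The only point requiring mild care is to keep the period (which may be $p$ or $p'$) separate from the coefficient $p$ appearing in the homology class, and to discard the finitely many small primes below the threshold $p_0$ when extracting the asymptotic constant.
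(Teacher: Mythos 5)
Your proposal is correct and is precisely the routine counting argument the paper leaves implicit (the corollary is stated without proof). Each ingredient is in order: distinct primes $p$ yield distinct non-zero classes $p[\gamma]$ in $\H_1(M;\Z)/\Tor$ and hence distinct non-contractible orbits, and Bertrand's postulate keeps the period below $k$ for primes up to $k/2$, giving $\gtrsim k/\log k$ by the prime number theorem. One could sharpen the constant slightly by invoking $p'-p=o(p)$ (the estimate from \cite{BHP} already used in the proof of Theorem \ref{thm:main2}) in place of Bertrand's postulate, but the stated bound only requires the cruder estimate you use.
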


An immediate consequence of Theorem \ref{thm:main} is that $\varphi_H$
has infinitely many periodic orbits with homology classes in
$\N[\gamma]$ whether or not $\Pp_1(\varphi_H,[\gamma])$ is finite.
Moreover, the non-degeneracy condition in the theorem can be relaxed
and replaced by a much weaker, albeit more technical, requirement that
$\gamma$ is isolated and \emph{homologically non-trivial}, i.e., its
local Floer homology is non-zero. (See Section \ref{sec:proofs} for a
discussion of this notion.) Finally, the orbit $\gamma$ need not be
one-periodic; the theorem (with obvious modifications) still holds
when $\gamma$ is just a periodic orbit.

Among the manifolds meeting the requirements of the theorem are, for
instance, all closed K\"ahler manifolds with negative sectional
curvature (e.g., complex hyperbolic spaces) or, more generally, any
closed symplectic manifold $M$ such that $\pi_2(M)=0$ and $\pi_1(M)$
is a hyperbolic group. Indeed, in this case any map $v \colon \T^2 \to
M$ is homologous to zero over $\R$, since $\pi_1(M)$ contains no
subgroups isomorphic to $\Z \oplus \Z$ (see, e.g.,
\cite{BriHae,GhdlH}), and hence the pull-back homomorphism
$$
v^* \colon\H^2(M;\R)=\H^2(K(\pi_1(M,1);\R))\to \H^2(\T^2;\R)
$$ 
is necessarily zero. (See \cite{BruK,Ked} and also references therein
for other examples.)

The existence of a non-contractible periodic orbit $\gamma$ as in
Theorem \ref{thm:main} appears to be a common occurrence. In fact, we
are not aware of any example of a Hamiltonian diffeomorphism with
isolated fixed points which obviously would not have a
non-contractible periodic orbit, provided that $\H_1(M;\R)\neq
0$. However, let us emphasize that the homology class of $\gamma$
cannot be prescribed in advance, for, as simple examples (e.g., the
height function on $\T^2$) show, non-contractible orbits need not
exist in a given homology class or in the multiples of a given
class. (In fact, KAM theory implies that this is even generically the
case, at least in dimension two.)

The growth rate established in Corollary \ref{cor:growth} is typical
in this class of questions in the absence of hyperbolicity. For
instance, a similar growth rate is known to hold in the context of the
Conley conjecture (see, e.g., \cite{Gi:conley,GG:gaps,He:irr,Hi,SZ})
and for closed geodesics on $S^2$; see, e.g., \cite{Hi:geo}. (There
are, however, stronger growth results; see, e.g., \cite{LeC,Vi:gen}.)
Perhaps it is also worth pointing out that Theorem \ref{thm:main} and
Corollary \ref{cor:growth} are reminiscent of some results on the
growth of prime geodesics on negatively curved manifolds (with the
scale exponentially adjusted); see, e.g., \cite{AS,Ma,PS}. However, to
the best our understanding, there is essentially no connection,
technical or conceptual, between the respective results beyond a
superficial resemblance.

Finally, the question of possible generalizations of Theorem
\ref{thm:main} to the case when $M$ is toroidally monotone will be
addressed elsewhere.  (Recall that $M$ is said to be toroidally
monotone if for every map $v\colon \T^2\to M$ we have $\left<[\omega],
  [v]\right>=\lambda\left<c_1(TM),[v]\right>$ for some constant
$\lambda\geq 0$ independent of $v$. See \cite{GG:hyperbolic} for a
partial result in this direction.)

\subsection{Acknowledgements} The author is grateful to Viktor
Ginzburg, Denis Osin, and Ioana \c{S}uvaina for useful discussions.

\section{Preliminaries}
\label{sec:prelim}
In this section, we set conventions and notation, and discuss Floer
homology for non-contractible periodic orbits.

\subsection{Conventions and notation}
\label{sec:conventions}
Throughout the paper, we assume that $(M^{2n}, \omega)$ is a closed
manifold equipped with an \emph{atoroidal} symplectic form $\omega$,
i.e., for every map $v\colon \T^2\to M$, the integral of $\omega$ over
$v$ vanishes: $\left< [\omega], [v] \right>=0$.  We refer the reader
to Section \ref{sec:intro} for examples of such (open or closed)
symplectic manifolds and here only note that they arise naturally as
the simplest setting where Floer theory for non-contractible periodic
orbits is defined.

All Hamiltonians $H$ are assumed to be one-periodic in time, i.e.,
$H\colon S^1\times M\to\R$, and we set $H_t = H(t,\cdot)$ for $t\in
S^1=\R/\Z$.  The Hamiltonian vector field $X_H$ of $H$ is defined by
$i_{X_H}\omega=-dH$. The (time-dependent) flow of $X_H$ is denoted by
$\varphi_H^t$ and its time-one map by $\varphi_H$. Such time-one maps
are referred to as \emph{Hamiltonian diffeomorphisms}.  The Hofer norm
of $H$ is $ \| H \| = \int_{S^1} \left[ \max_M H_t - \min_M H_t
\right] \, dt.  $

Let $K$ and $H$ be two one-periodic Hamiltonians. The composition
$K\nat H$ is defined by the formula
$$
(K\nat H)_t=K_t+H_t\circ(\varphi^t_K)^{-1},
$$
and the flow of $K\nat H$ is $\varphi^t_K\circ \varphi^t_H$. We set
$H^{\nat k}=H\nat\ldots \nat H$ ($k$ times).  Abusing terminology, we
will refer to $H^{\nat k}$ as the $k$th iteration of $H$.  (Note that
the flow $\varphi^t_{H^{\nat k}}=(\varphi_H^t)^k$, $t\in [0,\,1]$, is
homotopic with fixed end-points to the flow $\varphi^t_H$, $t\in [0,\,
k]$.)

In general, $H^{\nat k}$ is not one-periodic, even when $H$
is. However, this is the case if, for example, $H_0\equiv 0\equiv
H_1$. The latter condition can be met by reparametrizing the
Hamiltonian as a function of time without changing the time-one map;
the Hofer norm, and actions and indices of the periodic orbits do not
change during the procedure.  Thus, in what follows, we always treat
$H^{\nat k}$ as a one-periodic Hamiltonian.

The $k$th iteration of a one-periodic orbit $\gamma$ of $H$ is denoted
by $\gamma^k$. More specifically, $\gamma^k(t)=\varphi_{H^{\nat k}}^t
\left(\gamma(0)\right)$, where $t\in [0,\,1]$.  We can think of
$\gamma^k$ as the $k$-periodic orbit $\gamma(t)$, $t\in [0,\,k]$, of
$H$.  Hence, there is an action--preserving one-to-one correspondence
between one-periodic orbits of $H^{\nat k}$ and $k$-periodic orbits of
$H$.

\subsection{Floer homology for non-contractible periodic orbits}
\label{sec:Floer}
The key tool used in the proof of Theorem \ref{thm:main} is the
filtered Floer homology for non-contractible periodic orbits of
Hamiltonian diffeomorphisms.  Various flavors of Floer homology in
this case for both open and closed manifolds have been considered in
several other works; see, e.g.,
\cite{BPS,BuHa,GL,GG:hyperbolic,Lee,Ni,We}.  Below we briefly describe
the elements of the construction relevant to our case.

Let $\zeta$ be a free homotopy class of maps $S^1\to M$. Fix a
\emph{reference loop} $z\in\zeta$ and a symplectic trivialization of
$TM$ along $z$. (In fact, it would be sufficient to fix a
trivialization of the canonical bundle of $M$ along $z$.)  A capping
of $x \colon S^1 \to M$ with free homotopy class $\llbracket x
\rrbracket = \zeta$ is a cylinder (i.e., a homotopy) $\Pi \colon [0,1]
\times S^1 \to M$ connecting $x$ and $z$.  The action of a
one-periodic Hamiltonian $H$ on $x$ is 
$$
\CA_H(x)=
-\int_\Pi\omega+\int_{S^1} H_t(x(t))\,dt.
$$
The action $\CA_H(x)$ is well-defined, i.e., independent of capping,
for $\omega$ is atoroidal and hence $\left<[\omega],[v]\right>=0$,
where $v\colon \T^2\to M$ is the torus obtained by attaching two
different cappings to each other. Moreover, the critical points of
$\CA_H$ are exactly one-periodic orbits of the time-dependent
Hamiltonian flow $\varphi_H^t$ with homotopy class in $\zeta$.  The
action spectrum $\CS(H,\zeta)$ is the set of critical values of
$\CA_H$. It has zero measure; see, e.g., \cite{HZ}.

Observe that the trivialization of $TM$ along the reference loop $z$
uniquely extends to a trivialization along $\Pi$, and hence induces a
trivialization along $x$. Using this trivialization, the
\emph{Conley--Zehnder index} $\MUCZ(H,x)\in\Z$ of a
\emph{non-degenerate} orbit $x$ is defined as in \cite{Sa,SZ}. (Here
$x$ is said to be non-degenerate if the linearized return map
$d\varphi_H \colon T_{x (0)}M\to T_{x (0)}M$ does not have one as an
eigenvalue.) Similarly to the contractible case, the Conley-Zehnder
index is defined only modulo $2N$, where $N$ is the minimal
``toroidal'' Chern number of $M$, unless $c_1(TM)$ vanishes on
toroidal homology classes in $\H_2(M;\Z)$. Note that, by definition,
$N$ is the positive generator of the subgroup of $\Z$ generated by the
integrals of $c_1(TM)$ over all toroidal classes.

The construction of Floer homology $\HF(H,\zeta)$ and the filtered
Floer homology $\HF^{(a,\, b)}(H,\zeta)$, taking into account only the
one-periodic orbits with homotopy classes in $\zeta$, goes through
exactly as in the contractible case; see, e.g., \cite{BPS,Lee}.

Recall that the total Floer homology in a class $\zeta \neq 0$ is
trivial: $\HF(H,\zeta)=0$; for all one-periodic orbits of a
$C^2$-small autonomous Hamiltonian are constant, and hence
contractible.  However, the filtered Floer homology for some action
interval is obviously non-zero whenever there exists an isolated and
homologically non-trivial (e.g., non-degenerate) periodic orbit
$\gamma \in \zeta$.  Indeed, assuming for simplicity that
$\CA_H(\gamma)=0$, observe that
$\HF^{(-a,\,a)}(H,\llbracket\gamma\rrbracket)\neq0$, for instance,
when $(-a,\,a)$ contains only the action value zero for $a \not\in
\CS(H,\llbracket\gamma\rrbracket)$. This follows from the fact that
\begin{equation*}
\HF^{(-a,\,a)}(H,\llbracket\gamma\rrbracket)=
\HF(\gamma) \oplus \hdots \neq 0,
\end{equation*}
where $\HF(\gamma)$ is the local Floer homology of $H$ at $\gamma$
(see, e.g., \cite{F:89witten,Fl89fp,Gi:conley,GG:gap}), and the dots
represent the local Floer homology contributions from other orbits in
the class $\llbracket\gamma\rrbracket$ with action equal to zero; see,
e.g., \cite{GG:gap} for a proof of this fact.

The action and the index of periodic orbits (and hence the grading and
filtration of the Floer complex) do depend on the choice of reference
curves $z$ and, in the index case, on the trivialization of $TM$ along
$z$.  Whenever we consider the iteration $H^{\nat \ka}$ of $H$, we
simultaneously iterate each class $\zeta$ and the reference curve $z$
(i.e., we pass to $\zeta^ \ka$ and $z^\ka$). As a consequence, the
action functional on \emph{iterated} $\ka$-periodic orbits is
homogeneous with respect to the iterations, i.e.,
\begin{equation}
\label{eq:action-hom}
\CA_{H^{\nat \ka}}(x^{\ka}) = \ka \CA_H(x).
\end{equation}

\section{Proof of Theorem \ref{thm:main}}
\label{sec:proofs}
As has been mentioned in the introduction, we establish a more general
result. To state it, recall that an isolated periodic orbit $\gamma$
of $\varphi_H$ is said to be homologically non-trivial if the local
Floer homology of $H$ at $\gamma$ is non-zero. For instance, a
non-degenerate fixed point is homologically non-trivial. More
generally, an isolated fixed point with non-vanishing topological
index is homologically non-trivial; for this index is equal, up to a
sign, to the Euler characteristic of the local Floer homology.  The
notion of homological non-triviality seems to be particularly
well-suited for use in the context of Conley conjecture-type
questions; we refer the reader to \cite{Gu:hq} for a detailed
discussion of this condition.  Theorem \ref{thm:main} is an immediate
consequence of the following result.
\begin{Theorem}
\label{thm:main2}
Let $M$ be a closed symplectic manifold equipped with an atoroidal
symplectic form $\omega$. Assume that a Hamiltonian diffeomorphism
$\varphi_H$ of $M$ has an isolated and homologically non-trivial
one-periodic orbit $\gamma$ with homology class $[\gamma] \neq 0$ in
$\H_1(M;\Z)/\Tor$ and that $\Pp_1(\varphi_H,[\gamma])$ is finite.
Then, for every sufficiently large prime $p$, the diffeomorphism
$\varphi_H$ has a simple periodic orbit with homology class
$p[\gamma]$ and period either $p$ or $p'$, where $p'$ is the first
prime number greater than $p$.
\end{Theorem}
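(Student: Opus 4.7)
I would argue by contradiction. Assume that for some large prime $p$, $\varphi_H$ has no simple periodic orbit of period $p$ in class $p[\gamma]$ and, simultaneously, no simple periodic orbit of period $p'$ in class $p[\gamma]$. Since $p$ and $p'$ are prime, the only periodic orbits of $\varphi_H$ in class $p[\gamma]$ of period $p$ (resp.\ $p'$) must then be the $p$-th (resp.\ $p'$-th) iterates of the finitely many one-periodic orbits $\gamma_1,\ldots,\gamma_m$ in $\Pp_1(\varphi_H,[\gamma])$; write $a_j = \CA_H(\gamma_j)$.

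The central tool will be filtered Floer homology of the iterated Hamiltonian $H^{\nat p}$ in the iterated free homotopy class $\zeta^p$, together with the iteration theory of local Floer homology developed by Ginzburg and G\"urel, as used in \cite{Gu:hq}. By the action--iteration identity \eqref{eq:action-hom}, each iterate $\gamma_j^p$ has action $pa_j$, and these values are well separated for large $p$. Since $\gamma$ is homologically non-trivial, $\HF(\gamma) \neq 0$; and for every prime $p$ (assuming $\gamma^p$ remains isolated, else a simple $p$-periodic orbit exists directly) the local Floer homology $\HF(\gamma^p)$ remains non-zero, supported in a bounded range of degrees centered at $p\Delta(\gamma)$, where $\Delta(\gamma)$ is the mean Conley--Zehnder index.

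Next I would analyze $\HF^{(pa_\gamma-c,\,pa_\gamma+c)}(H^{\nat p}, \zeta^p)$ for fixed small $c>0$ and $a_\gamma = \CA_H(\gamma)$. Under the contradiction hypothesis this group is computed entirely from the local Floer homologies of those $\gamma_j^p$ with $a_j = a_\gamma$; in particular its rank is bounded independently of $p$, and its degree support lies in a bounded window around the finite set $\{p\Delta(\gamma_j)\}$. I would then produce a non-trivial class in this group coming from the non-vanishing $\HF^{(a_\gamma-c',\,a_\gamma+c')}(H,\zeta)$ (which contains $\HF(\gamma) \neq 0$) via a continuation/iteration transfer map, and carry out the parallel construction at $p'$. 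The degree shift between the two resulting classes is essentially $(p'-p)\Delta(\gamma)$, which, modulo the grading ambiguity $2N$ (the minimal toroidal Chern number), generically cannot be accommodated by the fixed finite list $\{\gamma_j^p\}$ and $\{\gamma_j^{p'}\}$ simultaneously: the degrees available from iterates of $\gamma_j$ are confined to the $m$ arithmetic progressions $p\Delta(\gamma_j) + O(1) \pmod{2N}$, whereas the transferred class must land at a single shifted location. Thus a new simple orbit must appear for at least one of the two periods $p$ or $p'$.

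The principal obstacle is making the degree-comparison step fully quantitative in the face of (i) the possible vanishing or shifting of $\HF(\gamma_j^p)$ for $\gamma_j^p$ whose linearized return map has roots of unity as eigenvalues resonant with $p$, and (ii) the grading ambiguity modulo $2N$. Issue (i) is controlled by restricting to prime iterates, following \cite{Gu:hq}, since for each $\gamma_j$ only finitely many primes can be ``bad.'' Issue (ii) is precisely why the conclusion of the theorem offers the alternative \emph{either} $p$ \emph{or} $p'$: a single prime can be swallowed by the mod-$2N$ ambiguity, but for two consecutive primes $p, p'$ this absorption cannot happen at once, once $\Delta(\gamma)$ is under control.
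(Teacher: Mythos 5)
The proposal takes a genuinely different route from the paper --- it tries to extract a contradiction from Conley--Zehnder \emph{degree} data (mean indices, support windows, the mod-$2N$ ambiguity), whereas the paper's proof uses only the \emph{action} filtration, continuation maps, and a Hofer-norm estimate --- but the degree-based route as sketched has serious gaps.

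First, and most importantly, you miss the paper's key structural idea: to find a $p'$-periodic orbit, the paper looks for it in the \emph{same} free homotopy class $\llbracket\gamma\rrbracket^{p}$ (equivalently, $p[\gamma]$), not in $\llbracket\gamma\rrbracket^{p'}$. Concretely, if there is no simple $p$-periodic orbit in class $p[\gamma]$, then the action spectrum of $H^{\nat p}$ in class $\zeta^{p}$ is $p\cdot\bigcup_l\CS(H,\llbracket\gamma_l\rrbracket)$; choosing an action window $(-\alpha,\alpha)$ around $0=\CA_{H^{\nat p}}(\gamma^{p})$ and using that $\|H^{\nat p'}-H^{\nat p}\|\le C(p'-p)$ while the window grows like $pa$, a commuting triangle of continuation maps (as in \cite{Gi:coiso}) forces $\HF^{(-\alpha,\alpha)+\delta}(H^{\nat p'},\zeta^{p})\neq 0$. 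This produces a $p'$-periodic orbit in the class $p[\gamma]$, whose simplicity is then checked by the elementary divisibility argument $p[\gamma]=p'[\nu]$. Your ``parallel construction at $p'$'' works in class $\zeta^{p'}$, which --- even if it went through --- would prove a different statement (``period $p$ in class $p[\gamma]$, or period $p'$ in class $p'[\gamma]$''), not the one claimed.

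Second, the quantitative estimate $p_{i+1}-p_i=o(p_i)$ from \cite{BHP} is what makes the continuation-map step possible (the Hofer shift $C(p_{i+1}-p_i)$ is eventually much smaller than the safe action window $p_ia$), and it does not appear anywhere in your plan. Third, the degree-comparison step is not a proof: ``generically cannot be accommodated'' and ``once $\Delta(\gamma)$ is under control'' flag genuine failure modes you have not ruled out. If $\Delta(\gamma)=0$, or if all $\Delta(\gamma_j)$ coincide, or if $(p'-p)\Delta(\gamma)$ happens to lie in the allowed window mod $2N$, the degree obstruction vanishes entirely; the paper's action-based argument works regardless. Fourth, there is no continuation ``transfer map'' from $\HF(H,\zeta)$ to $\HF(H^{\nat p},\zeta^{p})$ --- these are not connected by a homotopy of Hamiltonians in a fixed class --- so the proposed source of the non-trivial class should instead be the direct local-Floer-homology computation $\HF^{(-pa,pa)}(H^{\nat p},\zeta^p)\supset\HF(\gamma^p)=\HF(\gamma)\neq 0$, using admissibility of large primes from \cite{GG:gap}. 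Finally, your explanation of the ``either $p$ or $p'$'' alternative as absorbing the mod-$2N$ ambiguity is incorrect: that alternative simply reflects the logical structure ``no simple $p$-orbit $\Rightarrow$ simple $p'$-orbit'' and has nothing to do with grading.
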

Here, as in Section \ref{sec:intro}, $\Pp_1(\varphi_H,[\gamma])$ is
the collection of one-periodic orbits of $\varphi_H$ with homology
class $[\gamma]$ in $\H_1(M;\Z)/\Tor$.

\begin{Remark}
  In fact, the homology class $p[\gamma]$ in the assertion of the
  theorem can be replaced by the free homotopy class
  $\llbracket\gamma\rrbracket ^ p \defn \llbracket\gamma^p\rrbracket$,
  provided that $\llbracket\gamma\rrbracket \neq 0$ and, for instance,
  the only solutions to the equation $\llbracket\gamma\rrbracket ^ {p}
  = \zeta^{q}$, where $p$ and $q$ are sufficiently large 
  primes and $\zeta$ is a free homotopy class, are
  $\zeta=\llbracket\gamma\rrbracket$ and $p=q$.
\end{Remark}

\begin{proof}[Proof of Theorem \ref{thm:main2}]
  First, recall that all sufficiently large prime numbers $p$ are
  admissible in the sense of \cite{GG:gap}.  Thus, under such
  iterations of $H$, the orbit $\gamma$ stays isolated, and
$$
\HF(\gamma^{p}) = \HF(\gamma)
$$
by the persistence of local Floer homology theorem in
\cite{GG:gap}. In particular, in our case, $\HF(\gamma^{p}) \neq 0$
since $\HF(\gamma)\neq 0$.  From now on, we work with primes that are
large enough and hence admissible, which we order as $p_1< p_2<
\hdots$. In what follows, $p$ or $p_i$ always denotes a prime from
this sequence.

Proceeding with the proof, assume that $p_i$ is a sufficiently large
prime number such that $\varphi_H$ has no simple $p_i$-periodic orbit
in the homology class $p_i[\gamma]$. (If such a prime does not exist
then we are done.)  Our goal is to show that in this case $\varphi_H$
must have a simple $p_{i+1}$-periodic orbit in the homology class
$p_i[\gamma]$.

Let $\gamma_1=\gamma, \gamma_2, \hdots, \gamma_r$ be the set of
one-periodic orbits of $\varphi_H$ such that $[\gamma_l]=[\gamma]$ for
$l=1,\hdots,r$.  Due to the above assumption, any $p_i$-periodic orbit
of $\varphi_H$ with homology class $p_i[\gamma]$ is the $p_i$th
iteration of some one-periodic orbit $\beta$ of $\varphi_H$.  On the
other hand, clearly $[\beta^{p_i}]=p_i[\gamma]$ if and only if
$[\beta]=[\gamma]$. Thus $\beta$ must be one of the orbits
$\gamma_l$. In particular,
$$
\CS \left( H^{\nat p_i}, \llbracket\gamma\rrbracket^{p_i} \right) 
\subset
p_i\bigcup_{l=1}^r  \CS \left( H,\llbracket\gamma_l\rrbracket \right),
$$
where, as in Section \ref{sec:prelim}, $\llbracket\gamma\rrbracket$ is
the free homotopy class of $\gamma$ and
$\llbracket\gamma\rrbracket^{p_i}
\defn
\llbracket\gamma^{p_i}\rrbracket.
$

Assume without loss of generality that $\CA_H(\gamma)=0$, which we can
ensure by adding a constant to $H$; hence $\CA_{H^{\nat
    p}}(\gamma^p)=0$ for all iterations $p$ by
\eqref{eq:action-hom}. Following closely \cite{Gu:hq}, pick $a>0$
outside $ \bigcup_{l=1}^r \, \CS\left(H,\llbracket\gamma_l\rrbracket
\right) $ such that $0$ is the only action value in
$(-a,\,a)$. Therefore, $0$ is the only point of $\CS(H^{\nat p_i},
\llbracket\gamma\rrbracket^{ p_i})$ in the interval $(-p_ia,\,p_ia)$,
and hence
\begin{equation}
\label{eq:locFH}
\HF^{(-p_ia,\,p_ia)}(H^{\nat p_i}, \llbracket\gamma\rrbracket^{ p_i})=
\HF(\gamma^{p_i}) \oplus \hdots,
\end{equation}
where the dots represent the local Floer homology contributions from
the remaining one-periodic orbits $\gamma_l$ with zero action such that
$
\llbracket\gamma_l\rrbracket^{ p_i}
=
\llbracket\gamma\rrbracket^{ p_i}.
$

Let $C = \| H \|$ and choose $p_i$ so large that $p_i a > 6 C
(p_{i+1}-p_i) $.  The latter is guaranteed by the fact that
$p_{i+1}-p_i=o(p_i)$; see, e.g., \cite{BHP}.  Now, pick $\alpha>0$
such that $p_ia- 4C(p_{i+1}-p_i) < \alpha < p_ia- 2C(p_{i+1}-p_i)
$. Then we have
$$
-p_ia< -\alpha < -\alpha + 2 C (p_{i+1}-p_i) < 0 < 
\alpha < \alpha+ 2 C (p_{i+1}-p_i) < p_i a,
$$
and also 
$$
-p_{i+1} a < -\alpha + C (p_{i+1}-p_i) < 0 
< \alpha + C (p_{i+1}-p_i) < p_{i+1} a.
$$

Set $\delta_i:= C (p_{i+1}-p_i)$ and denote by $(a,\,b)+\eps$ the
shifted interval $(a+\eps,\,b+\eps)$.  For a linear homotopy from
$H^{\nat p_i}$ to $H^{\nat p_{i+1}}$, we have the induced map
$$
\HF^{(-\alpha,\,\alpha)}\left( H^{\nat p_i},
  \llbracket\gamma\rrbracket^{p_i} \right) \to
\HF^{(-\alpha,\,\alpha) + \delta_i}\left( H^{\nat p_{i+1}},
  \llbracket\gamma\rrbracket^{p_i} \right)
$$
in filtered Floer homology.  A proof of this fact for the ordinary
filtered Floer homology can be found in \cite{Gi:coiso}. This is also
true in our case since continuation maps preserve homotopy classes of
periodic orbits.  Likewise, the linear homotopy from $ H^{\nat
  p_{i+1}} $ to $H^{\nat p_i}$ results in another action shift in
$\delta_i$.  Consider now the following commutative diagram:
$$
\xymatrix{
\HF^{(-\alpha,\,\alpha)}\left( H^{\nat p_i}, 
\llbracket\gamma\rrbracket^{p_i}\right)
\ar[d] \ar[rrd]^\cong \\ 
\HF^{( -\alpha,\,\alpha) + \delta_i }
\left(H^{\nat p_{i+1}}, 
\llbracket\gamma\rrbracket^{p_i}\right)
\ar[rr]& &
\HF^{(-\alpha,\,\alpha)+ 2 \delta_i}
\left (H^{\nat p_i}, 
\llbracket\gamma\rrbracket^{p_i} \right) 
}
$$
Here the filtered Floer homology groups of $H^{\nat p_i}$ are non-zero
by \eqref{eq:locFH} and the choice of $\alpha$. Furthermore, the
diagonal map is an isomorphism induced by the natural
quotient-inclusion map; see, e.g., \cite{Gi:coiso}. Indeed, zero is
the only action value in the intervals $(-\alpha,\,\alpha)$ and
$(-\alpha,\,\alpha)+2\delta_i$, and the map is an isomorphism by the
stability of the filtered Floer homology; see, e.g., \cite{GG:gap}.

We now have a non-zero isomorphism factoring through the filtered
homology of $H^{\nat p_{i+1}}$ in the commutative diagram. In
particular, this group cannot be zero:
$$ 
\HF^{( -\alpha,\,\alpha) + \delta_i }
\left(H^{\nat p_{i+1}}, 
\llbracket\gamma\rrbracket^{p_i}\right) \neq 0.
$$ 
Thus $\varphi_H$ must have a $p_{i+1}$-periodic orbit $\eta$ in the
homotopy class $\llbracket\gamma\rrbracket^{p_i}$, and hence in the
homology class $p_i[\gamma]$. What remains to show is that $\eta$ is
necessarily simple, provided that $p_{i+1}$ (i.e., $p_i$) is large
enough. Arguing by contradiction, assume that $\eta=\nu^{p_{i+1}}$ for
some one-periodic orbit $\nu$.  Then
\begin{equation}
\label{eq:simple}
p_i[\gamma] = [\eta] = p_{i+1} [\nu].
\end{equation}
Let us write $[\gamma]=n \sigma$ and $[\nu] = m \rho$ for some prime
homology classes $\sigma$ and $\rho$ in $\H_1(M;\Z)/\Tor$, where
$n,\,m \in \N$.  Using \eqref{eq:simple}, we now see that
$\sigma=\rho$ and $n p_i= m p_{i+1}$, where $n$ is fixed -- it is
determined by $\gamma$. This is clearly impossible once $p_{i+1}$ is
greater than $n$.

\end{proof}

\end{document}